\newtheorem{theorem}{Theorem }[section]
\newtheorem{proposition}[theorem]{Proposition}
\newtheorem{definition}[theorem]{Definition}
\newtheorem{example}[theorem]{Example}
\newtheorem{remark}[theorem]{Remark}
\begin{document}
\begin{flushright}
 [BoundingProblem(2011-06-17)]\\
% date
 \end{flushright}

\title{On  Bounding Problems in Totally Ordered Commutative Semi-Groups} 
\author{Susumu Oda}

\footnote[0]{2000 Mathematics Subject Classification : 11A99\\ 
\hspace{10mm} Keywords:\ ascending chain, estimate problem } %%%%%%%%%%
%%%\address{}

\maketitle

 \baselineskip=16pt

\begin{center} Dedicated to Unyo Oda, who had lived for only ten years in Kochi
\end{center}

\begin{abstract} 
 The following is shown : Let  $S=\{a_1,a_2,..,a_{2n}\}$ be  a subset of a totally ordered commutative semi-group $(G,*,\leq)$ with $a_1\leq a_2\leq \cdots \leq a_{2n}$. Provided that a system of $n$ $a_{i_k} * a_{j_k}\ (a_{i_k}, a_{j_k} \in G ;\ 1 \leq k \leq n)$, where all $2n$ elements in $S$  must be used, are less than an element $N\ (\in G)$, then $a_1*a_{2n}, a_2*a_{2n-1}, \ldots, a_n*a_{n+1}$ are all less than $N$. This may be called the Upper Bounding Case. Moreover in the same way, we shall treat also the Lower Bounding Case. 
\end{abstract}

\vspace*{15mm}

\section{The Early Stage of a Problem}%%%%%%%%%%%%%%%%%%%%%%

\bigskip

The author was asked  one question (Proposition \ref{PP1} below), which is easily understandable for  almost people.
 
In Mathematical World, what almost all people easily understand often seems to be  difficult to be proved.
 We suspect that this may probably be such one.

But it seems that this is not the case. The answer is affirmative as is seen in the next section.
 However, after finished finding a proof to it, the author comes to be afraid that it is really of value.  He only hope that it is interesting to some people. In addition, he has heard that this result is available for proving another result in Mathematics. 

\bigskip

{\sl The proofs of Propositions in this section will be given in the section 2.}

\begin{proposition}{{\rm (Upper Bounding.)}}\label{PP1}
 Let  $S=\{a_1,a_2,..,a_{2n}\}$ be  a set of positive integers with
$a_1\leq a_2\leq \cdots \leq a_{2n}$. 
Provided once that a system of inequalities: 
$$
(1_n)\left\{ \begin{array}{ccc}
 a_{i_1}+a_{j_1}&<&N\\
a_{i_2}+a_{j_2}&<&N\\
\ldots&&\\
\ldots&&\\
a_{i_n}+a_{j_n}&<&N\\
\end{array}\right.
$$
 for an integer $N\in \mathbb{R}$ and that $\{{i_1},{j_1},{i_2},{j_2},\ldots ,{i_n},{j_n}\}=\{ 1,2,3,\ldots,2n \}$,
then  the following system of special inequalities hold:
$$
(2_n)\left\{ \begin{array}{ccc}
a_1 + a_{2n} &<& N\\
a_2 + a_{2n-1} &<& N\\
 \cdots &&\\
 \ldots &&\\
a_n + a_{n+1} &<& N.
\end{array}\right.
$$
 \end{proposition}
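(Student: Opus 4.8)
The plan is to prove each of the $n$ inequalities in $(2_n)$ separately, by contradiction, using a counting (pigeonhole) argument against the hypothesis $(1_n)$. Fix an index $k$ with $1 \le k \le n$ and suppose, for contradiction, that $a_k + a_{2n+1-k} \ge N$. The strategy is to isolate the $k$ largest elements of $S$, namely $a_{2n+1-k}, a_{2n+2-k}, \ldots, a_{2n}$, and to show that in the given matching $(1_n)$ each of them is forced to be paired with one of the strictly smaller elements $a_1, \ldots, a_{k-1}$.

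First I would record the monotonicity consequence of the ordering: for every index $h \ge 2n+1-k$ and every index $m \ge k$ we have $a_h + a_m \ge a_{2n+1-k} + a_k \ge N$, since $a_h \ge a_{2n+1-k}$ and $a_m \ge a_k$. Hence no element $a_h$ with $h \ge 2n+1-k$ can appear in an inequality of $(1_n)$ together with any partner $a_m$ whose index satisfies $m \ge k$, as that would violate the strict bound $< N$; in particular two of these large elements cannot be paired with each other. Consequently the partner of each of the $k$ elements $a_{2n+1-k}, \ldots, a_{2n}$ must have index in $\{1, 2, \ldots, k-1\}$.

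The decisive step is then the pigeonhole count. Because $(1_n)$ uses all $2n$ elements exactly once, the $k$ distinct large elements receive $k$ distinct partners; but these partners all lie among the $k-1$ elements $a_1, \ldots, a_{k-1}$, which is impossible. This contradiction shows $a_k + a_{2n+1-k} < N$, and letting $k$ range over $1, \ldots, n$ yields all of $(2_n)$.

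I expect the main obstacle to be purely bookkeeping: verifying that the large block $a_{2n+1-k}, \ldots, a_{2n}$ has exactly $k$ members while the admissible small block $a_1, \ldots, a_{k-1}$ has exactly $k-1$, and confirming that the inequality $2n+1-k \ge k$ (equivalently $k \le n$) keeps the two blocks disjoint so that the monotonicity step applies without overlap. Once the index arithmetic is pinned down the pigeonhole contradiction is immediate, and nothing beyond the order-compatibility of addition is used, which is exactly what lets the same argument carry over to the general totally ordered semi-group of the abstract.
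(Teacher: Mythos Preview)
Your argument is correct. Fixing $k$ and assuming $a_k + a_{2n+1-k} \ge N$, the monotonicity step forces each of the $k$ elements $a_{2n+1-k}, \ldots, a_{2n}$ to be matched with one of $a_1, \ldots, a_{k-1}$, and the pigeonhole contradiction is immediate. The index check $2n+1-k \ge n+1 > k$ for $k \le n$ keeps the large and small blocks disjoint, exactly as you note.

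This is, however, a genuinely different route from the paper's proof. The paper argues by induction on $n$: it first shows $a_1 + a_{2n} < N$ directly (whatever is paired with $a_{2n}$ dominates $a_1$), and then manufactures a valid system $(1_{n-1})$ on $a_2, \ldots, a_{2n-1}$ by replacing the two pairs containing $a_1$ and $a_{2n}$, say $a_1 + a_\ell < N$ and $a_{\ell'} + a_{2n} < N$, with the single pair $a_\ell + a_{\ell'}$, after checking that $a_\ell + a_{\ell'} < N$. The induction hypothesis then supplies the remaining inequalities of $(2_n)$. Your pigeonhole approach treats each inequality of $(2_n)$ independently and dispenses with both the induction and the repair-the-matching step, which makes it shorter and arguably more transparent; the paper's inductive argument, on the other hand, handles all of $(2_n)$ in one sweep and makes explicit how the given matching can be deformed toward the canonical one. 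Both proofs use nothing beyond the order-compatibility of the operation, so both carry over unchanged to the totally ordered commutative semi-group setting.
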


\begin{remark} {\rm In Proposition \ref{PP1}, all of elements in $S=\{a_1,a_2,..,a_{2n}\}$  are not necessarily positive  because we have only consider $0<a_1+c,a_2+c,..,a_{2n}+c$ and $N+c$ instead of $a_1,a_2,..,a_{2n}$ by choosing some $c \in \mathbb{N}$.}
\end{remark}

\bigskip

\begin{proposition}{{\rm (Lower Bounding.)}}\label{PP2}
 Let  $S=\{a_1,a_2,..,a_{2n}\}$ be  a set of positive real numbers with
$a_1\geq a_2\geq \cdots \geq a_{2n}$. 
Provided once a system of inequalities: 
$$
({}_n1)\left\{ \begin{array}{ccc}
 a_{i_1}+a_{j_1}&>&N\\
a_{i_2}+a_{j_2}&>&N\\
\ldots&&\\
\ldots&&\\
a_{i_n}+a_{j_n}&>&N\\
\end{array}\right.
$$
 for a real number $N\in \mathbb{R}$ and  $\{{i_1},{j_1},{i_2},{j_2},\ldots ,{i_n},{j_n}\}=\{ 1,2,3,\ldots,2n \}$,
then  the following system of special inequalities hold:
$$
({}_n2)\left\{ \begin{array}{ccc}
a_1 + a_{2n} &>& N\\
a_2 + a_{2n-1} &>& N\\
 \cdots &&\\
 \ldots &&\\
a_n + a_{n+1} &>& N.
\end{array}\right.
$$
 \end{proposition}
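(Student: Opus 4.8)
The plan is to exploit the evident order-duality between the two statements. Proposition \ref{PP2} should follow from Proposition \ref{PP1} by the order-reversing substitution $b_i = c - a_i$, where $c$ is chosen large enough that every $b_i$ is positive. This turns the chain $a_1 \geq a_2 \geq \cdots \geq a_{2n}$ into $b_1 \leq b_2 \leq \cdots \leq b_{2n}$, each hypothesis $a_{i_k} + a_{j_k} > N$ into $b_{i_k} + b_{j_k} < 2c - N$, and each desired conclusion $a_m + a_{2n+1-m} > N$ into $b_m + b_{2n+1-m} < 2c - N$. Applying Proposition \ref{PP1} to the $b_i$ with threshold $2c - N$ would then deliver Proposition \ref{PP2} immediately.

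The one gap in this reduction is that Proposition \ref{PP1} is phrased for integers while Proposition \ref{PP2} concerns reals, and the substitution $b_i = c - a_i$ produces real values. Rather than argue that the proof of Proposition \ref{PP1} never uses integrality, I would instead reprove the statement directly in the real setting by the mirror of the same argument, which costs nothing extra and keeps the real case self-contained.

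Concretely, I would fix $m$ with $1 \leq m \leq n$ and argue by contradiction, assuming $a_m + a_{2n+1-m} \leq N$. Let $A = \{2n+1-m, 2n+2-m, \ldots, 2n\}$ be the indices carrying the $m$ smallest values, so $|A| = m$ and $a_i \leq a_{2n+1-m}$ for all $i \in A$. The heart of the argument is the claim that the given system must match every index of $A$ to a partner lying in $\{1, 2, \ldots, m-1\}$: indeed, if some $i \in A$ were paired with an index $p \geq m$, then $a_p \leq a_m$ would give $a_i + a_p \leq a_{2n+1-m} + a_m \leq N$, contradicting the hypothesis $a_i + a_p > N$. Since every index of $A$ satisfies $i \geq 2n+1-m \geq m$ (this last inequality being exactly $m \leq n$), this also rules out two elements of $A$ being matched to each other.

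The contradiction is then a pigeonhole count: because each index occurs in exactly one pair of the system, the matching restricts to an \emph{injection} of $A$, a set of size $m$, into $\{1, \ldots, m-1\}$, a set of size $m-1$, which is impossible. Hence $a_m + a_{2n+1-m} > N$, and as $m$ ranges over $1, \ldots, n$ the whole system $({}_n2)$ is obtained. I expect the crux to be precisely the matching claim of the previous paragraph together with the deficit $m > m-1$: it is exactly the requirement that all $2n$ elements be used once each that forces the $m$ partners to be distinct and produces the one-slot shortfall, and the elementary estimate $2n+1-m \geq m$ is what confines every admissible partner to the top $m-1$ indices.
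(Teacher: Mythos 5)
Your argument is correct, but it is genuinely different from the paper's. The paper proves Proposition \ref{PP2} as the order-reversed mirror of Theorem \ref{T1}: an induction on $n$ that splits on whether the extreme pair $(a_1,a_{2n})$ occurs in the given system, and otherwise locates the two pairs $(a_1,a_\ell)$ and $(a_{\ell'},a_{2n})$, checks $a_\ell + a_{\ell'}$ against $N$ by monotonicity, swaps the two inequalities for the single inequality on $(a_\ell,a_{\ell'})$, and recurses on the remaining $2n-2$ elements. You instead prove each target inequality $a_m + a_{2n+1-m} > N$ separately and non-inductively: assuming it fails, every one of the $m$ smallest elements must be matched into $\{1,\dots,m-1\}$ (your estimate $2n+1-m \geq m$ correctly excludes two small elements being matched to each other), and the perfect-matching hypothesis makes this an injection of an $m$-set into an $(m-1)$-set, which is absurd. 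All steps check out, including the edge cases $m=1$ and $m=n$. Your approach buys a shorter, induction-free argument that isolates the real combinatorial content (a pigeonhole deficit of one), and since it uses only transitivity and the compatibility $\alpha\leq\beta,\ \gamma\leq\delta \Rightarrow \alpha*\gamma\leq\beta*\delta$, it transfers verbatim to the ordered-semigroup setting of Theorems \ref{T1} and \ref{T2}; the paper's exchange argument, by contrast, constructs an explicit smaller system and is the kind of reduction one might want if one cared about algorithmically transforming $(1_n)$ into $(2_n)$. Your preliminary remark about the substitution $b_i = c - a_i$ is also sound, and you were right to note that the integer hypothesis in Proposition \ref{PP1} makes a literal reduction awkward and to give the direct real argument instead.
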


\bigskip
\bigskip
\bigskip

\section{Main Results}%%%%%%%%%%%%%%%%%%%%%

%%%%%%%%%%%%%%%%%%%%%%%%
In this section, we generalize the results in the section 1 and give their proofs.

 By these generalizations, we are able to consider ``product" instead of ``sum" and to obtain the similar results for ``product". 

\begin{definition}{\rm (cf.[1,p.153])} {\rm 
 Let $(G,*)$ be an commutative semi-group ({\it e.i.}, a commutative semi-group with  a total order $\leq$.
 If for any $\alpha, \beta, \gamma, \delta \in G$, the following conditions hold:
$$\alpha \leq \beta, \gamma \leq \delta\ \ \Rightarrow \alpha *\gamma \leq \beta * \delta, $$ 
 then $(G,*,\leq)$ (abbreviated by $G$ if no confusions do not happen) is called {\it a totally ordered commutative semi-group}}.
\end{definition}

\bigskip

\begin{theorem}{{\rm (Upper Bounding.)}}\label{T1}
 Let $(G,*,\leq)$ be a totally ordered commutative semi-group and let  $S=\{a_1,a_2,..,a_{2n}\}$ be  a finite subset of $G$  with  $a_1\leq a_2\leq \cdots \leq a_{2n}$ ($n \in \mathbb{N}$). 
Provided once that a system of inequalities: 
$$
(1_n)\left\{ \begin{array}{ccc}
 a_{i_1}*a_{j_1}&<&N\\
a_{i_2}*a_{j_2}&<&N\\
\ldots&&\\
\ldots&&\\
a_{i_n}*a_{j_n}&<&N\\
\end{array}\right.
$$
 for a  $N\in G$ and that $\{{i_1},{j_1},{i_2},{j_2},\ldots ,{i_n},{j_n}\}=\{ 1,2,3,\ldots,2n \}$,
then  the following system of special inequalities hold:
$$
(2_n)\left\{ \begin{array}{ccc}
a_1 * a_{2n} &<& N\\
a_2 * a_{2n-1} &<& N\\
 \cdots &&\\
 \ldots &&\\
a_n * a_{n+1} &<& N.
\end{array}\right.
$$
 \end{theorem}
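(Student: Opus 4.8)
The plan is to prove each inequality in $(2_n)$ separately: I fix $k \in \{1,\ldots,n\}$ and show $a_k * a_{2n+1-k} < N$. The guiding principle is that, because $*$ is compatible with the order, it suffices to locate inside the given system $(1_n)$ a \emph{single} pair $a_p * a_q < N$ whose two factors dominate $a_k$ and $a_{2n+1-k}$ respectively; then $a_k * a_{2n+1-k} \le a_p * a_q < N$ follows at once from the defining implication of a totally ordered commutative semi-group. Thus the entire analytic content reduces to a purely combinatorial statement about the matching $\{i_1,j_1,\ldots,i_n,j_n\} = \{1,\ldots,2n\}$.

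The combinatorial claim I would isolate as a lemma is: for every $k$, the given matching contains a pair $\{p,q\}$ with $p \ge k$ and $q \ge 2n+1-k$. To prove it, consider the $k$ largest indices $T = \{2n+1-k,\,2n+2-k,\,\ldots,\,2n\}$ and the $k-1$ smallest indices $L = \{1,\ldots,k-1\}$. Each index in $T$ is paired with a unique partner. If some element of $T$ is paired with an index $\ge k$, that pair is exactly what I need. Otherwise every element of $T$ is paired with an index in $L$; but this would give an injection of the $k$-element set $T$ into the $(k-1)$-element set $L$, which is impossible. This pigeonhole step is the crux of the argument, and the key observation making it work is the deliberately off-by-one split $|T| = k$ versus $|L| = k-1$.

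With the lemma in hand the finish is immediate: choosing the pair $\{p,q\}$ with $p \ge k$ and $q \ge 2n+1-k$, monotonicity of the order gives $a_k \le a_p$ and $a_{2n+1-k} \le a_q$, hence $a_k * a_{2n+1-k} \le a_p * a_q$; and since $\{p,q\}$ occurs among the $n$ pairs of $(1_n)$, the right-hand side is $< N$. Letting $k$ range over $1,\ldots,n$ yields all of $(2_n)$.

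I expect the main obstacle to be psychological rather than technical: one is tempted to transform the given arbitrary matching into the antipodal one by a sequence of swaps, tracking how each swap affects the products, which becomes awkward in a general semi-group where no cancellation is available. The pigeonhole reformulation sidesteps this entirely, using only the order-compatibility axiom and never needing to compare or manipulate the products themselves. I would also note that the argument uses nothing special about $N$ or about $*$ beyond the single monotonicity implication, so the identical scheme (with the inequalities reversed and the roles of $T$ and $L$ interchanged) will dispatch the Lower Bounding case.
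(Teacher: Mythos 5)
Your proof is correct, but it takes a genuinely different route from the paper's. The paper argues by induction on $n$: if the pair $\{1,2n\}$ already occurs in the matching, it is removed and the induction hypothesis applied to the remaining $2n-2$ indices; otherwise $1$ is matched to some $\ell$ and $2n$ to some $\ell'$, one checks $a_1*a_{2n}\le a_{\ell'}*a_{2n}<N$ and $a_\ell*a_{\ell'}\le a_{\ell'}*a_{2n}<N$, and the two pairs $\{1,\ell\},\{\ell',2n\}$ are replaced by the single pair $\{\ell,\ell'\}$ to produce a valid instance on $\{2,\dots,2n-1\}$ for the induction hypothesis. You instead prove each inequality $a_k*a_{2n+1-k}<N$ independently by isolating a clean combinatorial lemma: since the $k$ indices in $T=\{2n+1-k,\dots,2n\}$ cannot all be matched into the $k-1$ indices of $L=\{1,\dots,k-1\}$, some pair $\{p,q\}$ of the matching has $p\ge k$ and $q\ge 2n+1-k$, whence $a_k*a_{2n+1-k}\le a_p*a_q<N$ by order-compatibility. (The one case to keep explicit in a write-up is when two elements of $T$ are matched to each other; since every element of $T$ is $\ge 2n+1-k\ge n+1>k$, such a pair also qualifies, so your dichotomy is sound.) Your argument avoids induction and the case analysis entirely, uses the monotonicity axiom exactly once per inequality, and makes the combinatorial core of the theorem explicit; the paper's induction, by contrast, has to verify that its surgery on the matching again yields a well-formed smaller instance. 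Both are complete proofs, and as you note, both transfer verbatim to the Lower Bounding case by reversing the inequalities.
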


\begin{proof}
 We show this by induction on the positive integer $n$.  
 
{\bf (i)} If the inequality $a_1 * a_{2n}<N$ appears in $(1_n)$, then remove $a_1 * a_{2n}<N$ in $(1_n)$, which is denoted by $(1_{n-1})$,  and consider $a_2\leq \cdots \leq a_{2n-1}$. By the induction hypothesis, we have  
$$
(2_{n-1})\left\{ \begin{array}{ccc}
a_2 * a_{2n-1} &<& N\\
a_3 * a_{2n-2} &<& N\\
 \cdots &&\\
 \ldots &&\\
a_{n} * a_{n+1} &<& N.
\end{array}\right.
$$
Then $(2_n)$ holds.

{\bf (ii)} Consider the case that the inequality $a_1 * a_{2n}<N$ does not appear in $(1_n)$.  We may assume that $i_1=1$ and $j_n=2n$.
 Then there exist $\ell$ and $\ell'\ (1<\ell, \ell'<2n)$ such that 
\begin{eqnarray*}
a_1 * a_\ell&<&N\\
a_{\ell'} * a_{2n}&<&N
 \end{eqnarray*}
appear in $(1_n)$.
 It follows that  $a_1 * a_{2n} \leq a_{\ell'} * a_{2n}<N$ because $a_1 \leq a_{\ell'}\ (1<\ell')$.

(ii-1) The case :  $a_\ell * a_{\ell'}\geq N$.

Since $a_{\ell} \leq a_{2n}$, it follows that $N\leq a_\ell * a_{\ell'}\leq a_{\ell'} * a_{2n}<N$, which is a contradiction. So this case does not occur.

(ii-2) The case : $a_\ell * a_{\ell'}<N$.

  Remove $a_1 * a_\ell<N$ and $a_{\ell'} * a_{2n}<N$ from $(1_n)$ and insert $a_\ell * a_{\ell'}<N$.  Then we  have the system of inequalities :
%%%%%%%%%%%%%
$$
(1'_{n-1})\left\{ \begin{array}{ccc}
a_{i_2} * a_{j_2}&<&N\\
\ldots&&\\
\ldots&&\\
a_{i_{n-1}} * a_{j_{n-1}}&<&N\\
a_\ell * a_{\ell'} &<&N,
\end{array}\right.
$$
where  $\{{i_2}, {j_2},\ldots ,{i_{n-1}}, {j_{n-1}}, \ell, {\ell'}\}=\{ 2, 3, \ldots, {2n-1}\}$, that is, all $a_2\leq \cdots \leq a_{2n-1}$ appear in $(1'_{n-1})$
%%%%%%%%%%%%%
 We can  apply the induction hypothesis to $(1'_{n-1})$, and obtain the same 
 $(2_{n-1})$ in (i) and hence $(2_n)$ holds  because $a_1 * a_{2n}<N$ as mentioned above.

Therefore the proof has finished.
\end{proof}

\begin{remark}
{\rm The proof of Proposition \ref{PP1} is obtained by replacing ``$*$" by ``$+$" in Theorem \ref{T1}.}
\end{remark}

\bigskip

By this generalization, we obtain the similar result for ``product" as follows.

%%%%%%%%%%%%%%%%%%%%%%
\begin{proposition}\label{P2}
  Let  $S=\{a_1,a_2,..,a_{2n}\}$ be  a set of positive real numbers with
$a_1\leq a_2\leq \cdots \leq a_{2n}$. 
Provided once that a system of inequalities: 
$$
(1_n)\left\{ \begin{array}{ccc}
 a_{i_1} \cdot a_{j_1}&<&N\\
a_{i_2} \cdot a_{j_2}&<&N\\
\ldots&&\\
\ldots&&\\
a_{i_n} \cdot a_{j_n}&<&N\\
\end{array}\right.
$$
 for a real number $N\in \mathbb{R}$ and that $\{{i_1},{j_1},{i_2},{j_2},\ldots ,{i_n},{j_n}\}=\{ 1,2,3,\ldots,2n \}$,
then  the following system of special inequalities hold:
$$
(2_n)\left\{ \begin{array}{ccc}
a_1 \cdot a_{2n} &<& N\\
a_2 \cdot a_{2n-1} &<& N\\
 \cdots &&\\
 \ldots &&\\
a_n \cdot a_{n+1} &<& N.
\end{array}\right.
$$

\end{proposition}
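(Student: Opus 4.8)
The plan is to recognize Proposition \ref{P2} as a direct instance of Theorem \ref{T1}, so that no new combinatorial work is needed. The only thing one has to supply is the verification that the positive real numbers under multiplication form a totally ordered commutative semi-group in the sense of the Definition preceding Theorem \ref{T1}; once that is in place the statement follows verbatim.

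First I would set $G = \R_{>0}$, take $*$ to be ordinary multiplication $\cdot$, and let $\leq$ be the usual order of $\R$ restricted to $\R_{>0}$. Associativity and commutativity of $\cdot$ are standard, and $\leq$ is a total order, so the only axiom requiring attention is the compatibility condition: for positive reals $\alpha \leq \beta$ and $\gamma \leq \delta$ one must have $\alpha\gamma \leq \beta\delta$. This follows from the chain $\alpha\gamma \leq \beta\gamma \leq \beta\delta$, where the first inequality uses $\gamma > 0$ and the second uses $\beta > 0$. It is exactly here that positivity of all the $a_i$ is essential: without it multiplication fails to preserve the order (e.g. $-2 \leq 1$ and $-3 \leq 1$ but $(-2)\cdot(-3) = 6 > 1 = 1 \cdot 1$), so the hypothesis of Theorem \ref{T1} could not be invoked.

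With this identification, the hypothesis system $(1_n)$ of Proposition \ref{P2} is literally the system $(1_n)$ of Theorem \ref{T1} for $G = \R_{>0}$, and the desired conclusion $(2_n)$ is the conclusion $(2_n)$ of Theorem \ref{T1}. Applying Theorem \ref{T1} therefore yields the result at once.

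An alternative route, for a reader who prefers to avoid the semi-group framework, is to pass to logarithms: setting $b_i = \log a_i$ turns $a_i \leq a_j$ into $b_i \leq b_j$ and each product inequality $a_i \cdot a_j < N$ into the additive inequality $b_i + b_j < \log N$, reducing the claim to the additive upper-bounding statement (the $* = +$ specialization of Theorem \ref{T1} over $\R$). Either way there is essentially no obstacle beyond the bookkeeping just described; the genuine combinatorial content lives entirely in Theorem \ref{T1}, whose induction already handles the reshuffling of the pairs.
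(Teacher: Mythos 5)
Your proposal is correct and matches the paper's intent exactly: the paper gives no separate proof of Proposition \ref{P2}, presenting it as the specialization of Theorem \ref{T1} to the totally ordered commutative semi-group $(\mathbb{R}_{>0},\times,\leq)$ (cf.\ Example \ref{E1}(5)), which is precisely your argument. Your explicit verification of the compatibility axiom (and the remark on why positivity is needed) supplies the small detail the paper leaves implicit.
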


%%%%%%%%%%%%%%%%%%%%%%%%%%%%%
\bigskip

Having a glimpse of the proof of Theorem \ref{T1}, we can assert and prove the lower bounding Theorem below.
 It may be obvious because we have only to give a symmetrical consideration.
 In other word, in Theorem \ref{T1} and its proof, we have only to replace $``<"$ and $``\leq"$  by $``>"$ and $``\geq"$, respectively.  The same arguments in its proof are effective.

\bigskip

\begin{theorem}{{\rm (Lower Bounding.)}}\label{T2}
 Let $(G,*,\leq)$ be a totally ordered additive semi-group and let  $S=\{a_1,a_2,..,a_{2n}\}$ be  a finite subset of $G$  with  $a_1\leq a_2\leq \cdots \leq a_{2n}$ ($n \in \mathbb{N}$).  
Provided once that a system of inequalities: 
$$
({}_n1)\left\{ \begin{array}{ccc}
 a_{i_1} * a_{j_1}&>&N\\
a_{i_2} * a_{j_2}&>&N\\
\ldots&&\\
\ldots&&\\
a_{i_n} * a_{j_n}&>&N\\
\end{array}\right.
$$
 for an element $N\in \mathbb{G}$ and  $\{{i_1},{j_1},{i_2},{j_2},\ldots ,{i_n},{j_n}\}=\{ 1,2,3,\ldots,2n \}$,
then  the following system of special inequalities hold:
$$
({}_n2)\left\{ \begin{array}{ccc}
a_1 * a_{2n} &>& N\\
a_2 * a_{2n-1} &>& N\\
 \cdots &&\\
 \ldots &&\\
a_n * a_{n+1} &>& N.
\end{array}\right.
$$
 \end{theorem}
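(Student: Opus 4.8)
The plan is to mirror the inductive proof of Theorem \ref{T1} essentially verbatim, reversing the order relations as the preceding remark suggests, and to run the induction on $n$. The only structural property of $(G,*,\leq)$ that the argument ever invokes is the compatibility axiom, in the one-sided form $\alpha\le\beta\Rightarrow\alpha*\gamma\le\beta*\gamma$; it is this monotonicity, together with the totality of $\le$, that powers every comparison, and no cancellation or inverses are needed. I would therefore present the whole proof as the order-theoretic dual of the one already given.

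For the base case $n=1$ the hypothesis $({}_n1)$ is the single inequality $a_1*a_2>N$, which is literally $({}_n2)$. For the inductive step I split, exactly as before, according to whether the extreme product $a_1*a_{2n}$ occurs among the hypotheses. If it does, I delete that inequality, reindex the remaining $n-1$ inequalities as a system on the $2n-2$ middle elements $a_2\le\cdots\le a_{2n-1}$, and invoke the induction hypothesis to obtain $a_2*a_{2n-1}>N,\dots,a_n*a_{n+1}>N$; adjoining the deleted inequality gives $({}_n2)$.

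The main case is when $a_1*a_{2n}$ is absent: then $a_1$ is paired with some $a_\ell$ and $a_{2n}$ with some $a_{\ell'}$, with $1<\ell,\ell'<2n$ and both $a_1*a_\ell>N$ and $a_{\ell'}*a_{2n}>N$. Because $a_1$ and $a_{2n}$ are the extreme elements, monotonicity recovers the extreme inequality at once (e.g.\ $a_1*a_{2n}\ge a_1*a_\ell>N$, using $a_\ell\le a_{2n}$). I then perform the swap that underlies the whole result: discard $a_1*a_\ell>N$ and $a_{\ell'}*a_{2n}>N$ and insert the single inequality $a_\ell*a_{\ell'}>N$, producing a system $(1'_{n-1})$ in which each of $a_2,\dots,a_{2n-1}$ appears exactly once. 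The induction hypothesis applied to $(1'_{n-1})$ then delivers the remaining inequalities of $({}_n2)$.

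The step that genuinely needs checking, and hence the part I would be most careful about, is the legitimacy of the swap: that the inserted inequality $a_\ell*a_{\ell'}>N$ actually holds and that $(1'_{n-1})$ really is a system covering the middle elements once each. In the upper-bounding Theorem \ref{T1} this forced the contradiction analysis (ii-1)/(ii-2); here, with the ordering $a_1\le\cdots\le a_{2n}$ as stated, the compatibility axiom gives it directly, since $a_{\ell'}\ge a_1$ yields $a_\ell*a_{\ell'}\ge a_1*a_\ell>N$, so the lower-bounding case is if anything marginally simpler. Had one instead literally flipped $\le$ to $\ge$ throughout, reversing the hypothesis to $a_1\ge\cdots\ge a_{2n}$, the same inequality would be obtained by the dual contradiction $N<a_{\ell'}*a_{2n}\le a_\ell*a_{\ell'}\le N$. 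Once the swap is justified the induction closes just as for Theorem \ref{T1}, and the bookkeeping, verifying that the reindexed systems use each surviving element exactly once, is the only remaining routine obligation.
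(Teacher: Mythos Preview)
Your proposal is correct and follows essentially the same approach as the paper, which simply instructs the reader to rerun the proof of Theorem~\ref{T1} with $<$ and $\leq$ replaced by $>$ and $\geq$. Your observation that the swap inequality $a_\ell*a_{\ell'}>N$ can be obtained directly from $a_\ell*a_{\ell'}\ge a_1*a_\ell>N$ (rather than by the contradiction split (ii-1)/(ii-2)) is a harmless streamlining; the paper's contradiction argument is just the contrapositive of the same monotonicity step, so the two presentations are equivalent.
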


\begin{remark}
 {\rm The proof of Proposition \ref{PP2} is obtained by replacing ``$*$" by ``$+$" in Theorem \ref{T2}.}
\end{remark}

\begin{proposition}\label{P4}
 $S=\{a_1,a_2,..,a_{2n}\}$ be  a set of positive real numbers with
$a_1\leq a_2\leq \cdots \leq a_{2n}$. 
Provided once a system of inequalities: 
$$
({}_n1)\left\{ \begin{array}{ccc}
 a_{i_1} \cdot a_{j_1}&>&N\\
a_{i_2} \cdot a_{j_2}&>&N\\
\ldots&&\\
\ldots&&\\
a_{i_n} \cdot a_{j_n}&>&N\\
\end{array}\right.
$$
 for a real number $N\in \mathbb{R}$ and  $\{{i_1},{j_1},{i_2},{j_2},\ldots ,{i_n},{j_n}\}=\{ 1,2,3,\ldots,2n \}$$\{a_{i_1},a_{j_1},a_{i_2},a_{j_2},\ldots ,a_{i_n},a_{j_n}\}=S$,
then  the following system of special inequalities hold:
$$
({}_n2)\left\{ \begin{array}{ccc}
a_1 \cdot a_{2n} &>& N\\
a_2 \cdot a_{2n-1} &>& N\\
 \cdots &&\\
 \ldots &&\\
a_n \cdot a_{n+1} &>& N.
\end{array}\right.
$$
\end{proposition}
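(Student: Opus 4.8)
The plan is to recognize Proposition \ref{P4} as the multiplicative specialization of the general Lower Bounding Theorem \ref{T2}, exactly as Proposition \ref{P2} specializes the Upper Bounding Theorem \ref{T1}. Concretely, I would take $(G,*,\leq) = (\R_{>0}, \cdot, \leq)$, the set of positive real numbers under ordinary multiplication with the usual order, and then verify that this triple is a totally ordered commutative semi-group in the sense of the Definition, so that Theorem \ref{T2} applies verbatim.

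First I would record the semi-group axioms: multiplication on $\R_{>0}$ is associative and commutative, and the product of two positive reals is again positive, so $\R_{>0}$ is closed under $\cdot$; the usual $\leq$ is a total order on $\R_{>0}$. The one step worth stating explicitly is the order-compatibility condition, namely that $\alpha \leq \beta$ and $\gamma \leq \delta$ imply $\alpha \cdot \gamma \leq \beta \cdot \delta$. For positive reals this follows from the chain $\alpha\gamma \leq \beta\gamma \leq \beta\delta$, where the first inequality multiplies $\alpha \leq \beta$ by the positive number $\gamma$ and the second multiplies $\gamma \leq \delta$ by the positive number $\beta$.

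With these verifications in place, the hypothesis of Proposition \ref{P4}, namely the system $({}_n1)$ together with $\{i_1,j_1,\ldots,i_n,j_n\}=\{1,\ldots,2n\}$, is precisely the hypothesis of Theorem \ref{T2} for this particular $G$, and the desired conclusion $({}_n2)$ is the corresponding conclusion. Hence Proposition \ref{P4} follows immediately.

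I do not expect a genuine obstacle here; the entire combinatorial content has already been carried by the semi-group generalization. The only point requiring care is the compatibility axiom, and there positivity is genuinely used --- it is exactly what prevents the inequalities from reversing under multiplication --- which is why the hypothesis restricts both $S$ and $N$ to positive reals rather than to arbitrary reals.
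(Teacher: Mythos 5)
Your proposal is correct and matches the paper's own (implicit) route: the paper derives Proposition \ref{P4} by specializing Theorem \ref{T2} to the totally ordered commutative semi-group $(\mathbb{R}_{>0},\times,\leq)$, which it lists explicitly in Example \ref{E1}(5). Your explicit verification of the compatibility axiom $\alpha\le\beta,\ \gamma\le\delta \Rightarrow \alpha\gamma\le\beta\gamma\le\beta\delta$ is exactly the one point worth writing down, and you have it right.
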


%%%%%%%%%%%%%%%%%%%%%%%%%%%%%%%%%%%%%%%%%%%%%%%%%%%%%%%%%%%%%%%%%%%

\bigskip

The Propositions considered above are only some of the precise expressions (1) $\sim$ (6) in the following Examples.

\begin{example}\label{E1}
{\rm  We can consider the following totally ordered commutative semi-group as $G$ in Theorems \ref{T1} and \ref{T2}:\\
(1) $(\mathbb{Z}_{\geq 1},+,\leq)$, and $(\mathbb{Z}_{>0},\times,\leq)$,\\
(2) $(\mathbb{Z},+,\leq)$,\\
(3) $(\mathbb{Q}_{> 0},+,\leq)$, and $(\mathbb{Q}_{>0},\times,\leq)$\\
(4) $(\mathbb{Q},+,\leq)$,\\
(5) $(\mathbb{R}_{> 0},+,\leq)$, and $(\mathbb{R}_{>0},\times,\leq)$\\
(6) $(\mathbb{R},+,\leq)$,\\
(7) ${(\mathbb{Z}_{\geq 0}}^n,+,\leq_{lex})$, where $\leq_{lex}$ denotes the lexicographic order in ${\mathbb{Z}_{\geq 0}}^n$.
 Note that this $({\mathbb{Z}_{\geq 0}}^n,+,\leq_{lex})$ can be used for Theory of G\"{o}bner Basis concerning  polynomial rings.} 
\end{example}

%%%%%%%%%%%%%%%%%%%%%%%%%%%%%%%%%%%%
\bigskip
\bigskip

\section{Examples}

It is easy to know that the numbers of systems $(1_n)$ is equal to $\Pi_{k=1}^n(2k-1)$ up to symmetry. For $n=3$, there exist $15$ inequalities in $(1_2)$, and for $n=4$, there exist  $105$ inequalities in $(1_4)$ when each $a_i$'s values are fixed.  The author has checked several $n=4$ case concretely.  Here we show  three cases for $n=3$. Readers could check our results are true for such cases. 

\bigskip

\noindent
{\bf Example 0-1(Trivial Example)}

$ 2n=6 : a_1=1, a_2=2, a_3=3, a_4=4, a_5=5, a_6=6$

$$
\begin{array}{|r|r|r|r|r|r|}
 \hline
{\rm (sum)\ +}&   &      &   &{\rm Max}&{\rm Min}\\
 \hline
1_3(1)& 1 + 2 = 3& 3 + 4 = 7& 5 + 6 = 11& 11& 3\\
 \hline
 1_3(2)& 1 + 2 = 3& 3 + 5 = 8& 4 + 6 = 10& 10& 3\\
 \hline
 1_3(3)& 1 + 2 = 3& 3 + 6 = 9& 4 + 5 = 9& 9& 3\\
 \hline
1_3(4)& 1 + 3 = 4& 2 + 4 = 6& 5 + 6 = 11& 11& 4\\
\hline
1_3(5)& 1 + 3 = 4& 2 + 5 = 7& 4 + 6 = 10& 10& 4\\
\hline
1_2(6)& 1 + 3 = 4& 2 + 6 = 8& 4 + 5 = 9& 9& 4\\
\hline
1_3(7)& 1 + 4 = 5& 2 + 3 = 5& 5 + 6 = 11& 11& 5\\
\hline
1_3(8)& 1 + 4 = 5& 2 + 5 = 7& 3 + 6 = 9& 9& 5\\
\hline
1_3(9)& 1 + 4 = 5& 2 + 6 = 8& 3 + 5 = 8& 8& 5\\
\hline
1_3(10)& 1 + 5 = 6& 2 + 3 = 5& 4 + 6 = 10& 10&  5\\
\hline
1_3(11)& 1 + 5 = 6& 2 + 4 = 6& 3 + 6 = 9& 9& 6\\
\hline
1_3(12)& 1 + 5 = 6& 2 + 6 = 8& 3 + 4 = 7& 8& 6\\
\hline
1_3(13)& 1 + 6 = 7& 2 + 3 = 5& 4 + 5 = 9& 9& 5\\
\hline
1_3(14)& 1 + 6 = 7& 2 + 4 = 6& 3 + 5 = 8& 8& 6\\
\hline
(\#) 1_3(15)& 1 + 6 = 7& 2 + 5 = 7& 3 + 4 = 7& 7& 7\\
  \hline
\end{array}
$$
 
\vspace*{10mm}

\noindent
{\bf Example 0-2(Trivial Example)}

$2n=6 : a_=1, a_2=2, a_3=3, a_4=4, a_5=5, a_6=6$

$$
\begin{array}{|r|r|r|r|r|r|}
\hline
(product)&    &    &   & Max & Min \\
\hline
1_3(1)& 1 \cdot  2 = 2& 3 \cdot  4 = 12& 5 \cdot  6 = 30& 30& 2\\
 \hline
1_3(2)& 1 \cdot  2 = 2& 3 \cdot  5 = 15& 4 \cdot  6 = 24& 24& 2\\
 \hline
1_3(3)& 1 \cdot  2 = 2& 3 \cdot  6 = 18& 4 \cdot  5 = 20& 20& 2\\
 \hline
1_3(4)& 1 \cdot  3 = 3& 2 \cdot  4 = 8& 5 \cdot  6 = 30& 30& 3\\
 \hline
1_3(5)& 1 \cdot  3 = 3& 2 \cdot  5 = 10& 4 \cdot  6 = 24& 24& 3\\
 \hline
1_2(6)& 1 \cdot  3 = 3& 2 \cdot  6 = 12& 4 \cdot  5 = 20& 20& 3\\
 \hline
1_3(7)& 1 \cdot  4 = 4& 2 \cdot  3 = 6& 5 \cdot  6 = 30& 30& 4\\
 \hline
1_3(8)& 1 \cdot  4 = 4& 2 \cdot  5 = 10& 3 \cdot  6 = 18& 18& 4\\
 \hline
1_3(9)& 1 \cdot  4 = 4& 2 \cdot  6 = 12& 3 \cdot  5 = 15& 15& 4\\
 \hline
1_3(1)& 1 \cdot  5 = 5& 2 \cdot  3 = 6& 4 \cdot  6 = 24& 24& 5\\
 \hline
1_3(11)& 1 \cdot  5 = 5& 2 \cdot  4 = 8& 3 \cdot  6 = 18& 18& 5\\
 \hline
1_3(12)& 1 \cdot  5 = 5& 2 \cdot  6 = 12& 3 \cdot  4 = 12& 12& 5\\
 \hline
1_3(13)& 1 \cdot  6 = 6& 2 \cdot  3 = 6& 4 \cdot  5 = 20& 20& 6\\
 \hline
1_3(14)& 1 \cdot  6 = 6& 2 \cdot  4 = 8& 3 \cdot  5 = 15& 15& 6\\
 \hline
(\#) 1_3(15)& 1 \cdot  6& = 6 2 \cdot  5 = 10& 3 \cdot  4 = 12& 12& 6\\
  \hline
  \end{array}
$$

\vspace*{10mm}

\noindent
{\bf Example 1-1}

$2n=6 : a_1=1, a_2=3, a_3=6, a_4=8, a_5=9, a_6=11$

$$ 
 \begin{array}{|r|r|r|r|r|r|}
\hline
(sum)&           &     &    & Max& Min\\ 
\hline
1_3(1)& 1 + 3 = 4& 6 + 8 = 14& 9 + 11 = 20& 20& 4\\ 
 \hline
1_3(2)& 1 + 3 = 4& 6 + 9 = 15& 8 + 11 = 19& 19& 4\\
 \hline
1_3(3)& 1 + 3 = 4& 6 + 11 = 17& 8 + 9 = 17& 17& 4\\
 \hline
1_3(4)& 1 + 6 = 7& 3 + 8 = 11& 9 + 11 = 20& 20& 7\\
 \hline
1_3(5)& 1 + 6 = 7& 3 + 9 = 12& 8 + 11 = 19& 19& 7\\
 \hline
1_2(6)& 1 + 6 = 7& 3 + 11 = 14& 8 + 9 = 17& 17& 7\\
 \hline
1_3(7)& 1 + 8 = 9& 3 + 6 = 9& 9 + 11 = 20& 20& 9\\
 \hline
1_3(8)& 1 + 8 = 9& 3 + 9 = 12& 6 + 11 = 17& 17& 9\\
 \hline
1_3(9)& 1 + 8 = 9& 3 + 11 = 14& 6 + 9 = 15& 15& 9\\
 \hline
1_3(10)& 1 + 9 = 10& 3 + 6 = 9& 8 + 11 = 19& 19& 9\\
 \hline
1_3(11)& 1 + 9 = 10& 3 + 8 = 11& 6 + 11 = 17& 17& 10\\
 \hline
1_3(12)& 1 + 9 = 10& 3 + 11 = 14& 6 + 8 = 14& 14& 10\\
 \hline
1_3(13)& 1 + 11 = 12& 3 + 6 = 9& 8 + 9 = 17& 17& 9\\
 \hline
1_3(14)& 1 + 11 = 12& 3 + 8 = 11& 6 + 9 = 15& 15& 11\\
(\#) 1_3(15)& 1 + 11 = 12& 3 + 9 = 12& 6 + 8 = 14& 14& 12\\
  \hline
  \end{array}
$$

\vspace*{10mm}

\noindent
{\bf Example 1-2}

$2n=6 : a_1=1, a_2=3, a_3=6, a_4=8, a_5=9, a_6=11$

 $$
 \begin{array}{|r|r|r|r|r|r|}
\hline
(product)&           &     &    & Max& Min\\ 
\hline
1_3(1)& 1 \cdot  3 = 3& 6 \cdot  8 = 48& 9 \cdot  11 = 99& 99& 3\\ 
\hline
1_3(2)& 1 \cdot  3 = 3& 6 \cdot  9 = 54& 8 \cdot  11 = 88& 88& 3\\ 
\hline
1_3(3)& 1 \cdot  3 = 3& 6 \cdot  11 = 66& 8 \cdot  9 = 72& 72& 3\\
 \hline
1_3(4)& 1 \cdot  6 = 6& 3 \cdot  8 = 24& 9 \cdot  11 = 99& 99& 6\\
 \hline
1_3(5)& 1 \cdot  6 = 6& 3 \cdot  9 = 27& 8 \cdot  11 = 88& 88& 6\\ 
\hline
1_2(6)& 1 \cdot  6 = 6& 3 \cdot  11 = 33& 8 \cdot  9 = 72& 72& 6\\ 
\hline
1_3(7)& 1 \cdot  8 = 8& 3 \cdot  6 = 18& 9 \cdot  11 = 99& 99& 8\\ 
\hline
1_3(8)& 1 \cdot  8 = 8& 3 \cdot  9 = 27& 6 \cdot  11 = 66& 66& 8\\ 
\hline
1_3(9)& 1 \cdot  8 = 8& 3 \cdot  11 = 33& 6 \cdot  9 = 54& 54& 8\\ 
\hline
1_3(10)& 1 \cdot  9 = 9& 3 \cdot  6 = 18& 8 \cdot  11 = 88& 88& 9\\ 
\hline
1_3(11)& 1 \cdot  9 = 9& 3 \cdot  8 = 24& 6 \cdot  11 = 66& 66& 9\\ 
\hline
1_3(12)& 1 \cdot  9 = 9& 3 \cdot  11 = 33& 6 \cdot  8 = 48& 48& 9\\ 
\hline
1_3(13)& 1 \cdot  11 = 11& 3 \cdot  6 = 18& 8 \cdot  9 = 72& 72& 11\\ 
\hline
1_3(14)& 1 \cdot  11 = 11& 3 \cdot  8 = 24& 6 \cdot  9 = 54& 54& 11\\ 
\hline
(\#) 1_3(15)& 1 \cdot  11 = 11& 3 \cdot  9 = 27& 6 \cdot  8 = 48& 48& 11\\ 
\hline
  \end{array}
$$

\vspace*{10mm}

\noindent
{\bf Example 2-1}

$2n=6 : a_1=2, a_2=7, a_3=11, a_4=14, a_5=16, a_6=17$

 $$
 \begin{array}{|r|r|r|r|r|r|}
\hline
(sum)&           &             &            & Max& Min\\
 \hline
1_3(1)& 2 + 7 = 9& 11 + 14 = 25& 16 + 17 = 33& 33& 9\\
 \hline
1_3(2)& 2 + 7 = 9& 11 + 16 = 27& 14 + 17 = 31& 31& 9\\
 \hline
1_3(3)& 2 + 7 = 9& 11 + 17 = 28& 14 + 16 = 30& 30& 9\\
 \hline
1_3(4)& 2 + 11 = 13& 7 + 14 = 21& 16 + 17 = 33& 33& 13\\
 \hline
1_3(5)& 2 + 11 = 13& 7 + 16 = 23& 14 + 17 = 31& 31& 13\\
 \hline
1_2(6)& 2 + 11 = 13& 7 + 17 = 24& 14 + 16 = 30& 30& 13\\
 \hline
1_3(7)& 2 + 14 = 16& 7 + 11 = 18& 16 + 17 = 33& 33& 16\\
 \hline
1_3(8)& 2 + 14 = 16& 7 + 16 = 23& 11 + 17 = 28& 28& 16\\
 \hline
1_3(9)& 2 + 14 = 16& 7 + 17 = 24& 11 + 16 = 27& 27& 16\\
 \hline
1_3(10)& 2 + 16 = 18& 7 + 11 = 18& 14 + 17 = 31& 31& 18\\
 \hline
1_3(11)& 2 + 16 = 18& 7 + 14 = 21& 11 + 17 = 28& 28& 18\\
 \hline
1_3(12)& 2 + 16 = 18& 7 + 17 = 24& 11 + 14 = 25& 25& 18\\
 \hline
1_3(13)& 2 + 17 = 19& 7 + 11 = 18& 14 + 16 = 30& 30& 18\\
 \hline
1_3(14)& 2 + 17 = 19& 7 + 14 = 21& 11 + 16 = 27& 27& 19\\
 \hline
(\#) 1_3(15)& 2 + 17 = 19& 7 + 16 = 23& 11 + 14 = 25& 25& 19\\
  \hline
  \end{array}
$$

\vspace*{10mm}

\noindent
{\bf Example 2-2}

$2n=6 : a_1=2, a_2=7, a_3=11, a_4=14, a_5=16, a_6=17$

$$
  \begin{array}{|r|r|r|r|r|r|}
 \hline
(product)&            &                   &                  & Max& Min\\
 \hline
1_3(1)& 2 \cdot  7 = 14& 11 \cdot  14 = 154& 16 \cdot  17 = 272& 272& 14\\ 
\hline
1_3(2)& 2 \cdot  7 = 14& 11 \cdot  16 = 176& 14 \cdot  17 = 238& 238& 14\\
 \hline
1_3(3)& 2 \cdot  7 = 14& 11 \cdot  17 = 187& 14 \cdot  16 = 224& 224& 14\\
 \hline
1_3(4)& 2 \cdot  11 = 22& 7 \cdot  14 = 98& 16 \cdot  17 = 272& 272& 22\\
 \hline
1_3(5)& 2 \cdot  11 = 22& 7 \cdot  16 = 112& 14 \cdot  17 = 238& 238& 22\\
 \hline
1_2(6)& 2 \cdot  11 = 22& 7 \cdot  17 = 119& 14 \cdot  16 = 224& 224& 22\\
 \hline
1_3(7)& 2 \cdot  14 = 28& 7 \cdot  11 = 77& 16 \cdot  17 = 272& 272& 28\\
 \hline
1_3(8)& 2 \cdot  14 = 28& 7 \cdot  16 = 112& 11 \cdot  17 = 187& 187& 28\\
 \hline
1_3(9)& 2 \cdot  14 = 28& 7 \cdot  17 = 119& 11 \cdot  16 = 176& 176& 28\\
 \hline
1_3(10)& 2 \cdot  16 = 32& 7 \cdot  11 = 77& 14 \cdot  17 = 238& 238& 32\\
 \hline
1_3(11)& 2 \cdot  16 = 32& 7 \cdot  14 = 98& 11 \cdot  17 = 187& 187& 32\\
 \hline
1_3(12)& 2 \cdot  16 = 32& 7 \cdot  17 = 119& 11 \cdot  14 = 154& 154& 32\\
 \hline
1_3(13)& 2 \cdot  17 = 34& 7 \cdot  11 = 77& 14 \cdot  16 = 224& 224& 34\\
 \hline
1_3(14)& 2 \cdot  17 = 34& 7 \cdot  14 = 98& 11 \cdot  16 = 176& 176& 34\\
 \hline
(\#) 1_3(15)& 2 \cdot  17 = 34& 7 \cdot  16 = 112& 11 \cdot  14 = 154& 154& 34\\
 \hline   
   \end{array}
$$

\bigskip
\bigskip
{\bf Acknowledgment:} The author would be grateful to Prof. T. Yamaguchi for inquiring whether this interesting problem (Proposition \ref{PP1}) has an affirmative solution or not.

\bigskip
\bigskip

%%%%%%%%%%%%%%%
\vspace*{10mm}

Department of Mathematics, 

Faculty of Education, 

Kochi University 
 
\bigskip
\bigskip

Akebono-cho 2-5-1, Kochi\ 780-8520, 

JAPAN

ssmoda@kochi-u.ac.jp

\end{document}